\newtheorem{theorem}{Theorem}
\newtheorem{prop}[theorem]{Proposition}
\newcommand{\Rbb}{\mathbb{R}}
\newcommand{\Cbb}{\mathbb{C}}
\newcommand{\Real}{\text{Re}\,}
\newcommand{\Imag}{\text{Im}\,}
\newcommand{\calh}{\mathcal{H}}
\newcommand{\graph}{\text{graph}}
\newcommand{\vp}{\varphi}
\newcommand{\hdash}{\mbox{---------}}
\newcommand{\mmap}{\mathfrak{X}}
\newcommand{\Js}{\mathcal{J}}
\newcommand{\Sp}{\text{Sp}(2n,\mathbb{R})}
\newcommand{\graphch}{\graph_{\Cbb}\calh}
\newcommand{\sgap}{\vspace{12pt}}
\newcommand{\mgap}{\vspace{24pt}}
\newcommand{\important}[1]{\noindent\textbf{#1}}
\title[Linear Symplectomorphisms as $R$-Lagrangian Subspaces]{Linear Symplectomorphisms\\ as $R$-Lagrangian Subspaces}
\author{Chris Hellmann}
\author{Brennan Langenbach}
\author{Michael VanValkenburgh}
\begin{document}
\maketitle

\begin{abstract}
	The graph of a real linear symplectomorphism is an $R$-Lagrangian subspace of a complex symplectic vector space. The restriction of the complex symplectic form is thus purely imaginary and may be expressed in terms of the generating function of the transformation. We provide explicit formulas; moreover, as an application, we give an explicit general formula for the metaplectic representation of the real symplectic group.
	
\end{abstract}

\mgap

\important{Keywords:} Complex symplectic linear algebra, linear symplectomorphisms, Lagrangian submanifolds, the metaplectic representation

\sgap

\important{Subject Classifications:} 37J10, 51A50, 70H15, 81S10

\mgap

\section{Introduction}\label{S:Intro}

\subsection{Overview}

As part of our symplectic upbringing, our ancestors impressed upon us the Symplectic Creed:
\begin{quote}
	``EVERYTHING IS A LAGRANGIAN SUBMANIFOLD'' \cite{R:Weinstein}.
\end{quote}
Obviously false if taken literally, rather than a ``creed'' it might be called the Maslow-Weinstein hammer, or, in French, \emph{la d\'{e}formation professionnelle symplectique}, saying that ``if all you have is a [symplectic form], everything looks like a [Lagrangian submanifold],'' or, in other words, to a symplectic geometer, everything should be expressed in terms of Lagrangian submanifolds. In this paper we consider a vector space endowed with \emph{two} symplectic forms, namely the real and imaginary parts $\Real\omega^{\Cbb}$ and $\Imag\omega^{\Cbb}$ of a complex symplectic form $\omega^{\Cbb}$, and begin with the simple observation that 
\begin{quote}
	NOT EVERY LAGRANGIAN SUBMANIFOLD [with respect to $\Real\omega^{\Cbb}$] \\
	IS A LAGRANGIAN SUBMANIFOLD [with respect to $\Imag\omega^{\Cbb}$].
\end{quote}
We study its implications for the classification of real linear symplectomorphisms $\calh$, as the graph of $\calh$ is essentially by definition a Lagrangian subspace with respect to $\Real\omega^{\Cbb}$; we ask, with some abuse of language:

\sgap

\important{Open Problem:} Is every $2n\times 2n$ skew-symmetric matrix of the form $\Imag\omega^{\Cbb}|_{\graph\calh}$ for some $\calh$?

\sgap

We believe that an answer would shed some light on the structure of linear symplectomorphisms. While our primary reason for writing this article is to precisely formulate the above open problem, which we do in Section~\ref{S:problem}, our primary technical result is to rewrite it in terms of generating functions; after all, if one guiding principle is the Symplectic Creed, another is that ``Symplectic Topology is the Geometry of Generating Functions'' \cite{R:Viterbo}. Or, to go further back, while Sir William Rowan Hamilton first conceived of generating functions (or as he called them, \emph{characteristic functions}) as mathematical tools in his symplectic formulation of optics, he later found, in his symplectic formulation of classical mechanics, that the generating function for a physical system is the least action function, in a sense that we will not make precise \cite{R:AbeMarsden}, \cite{R:HamDynOne}; this gives a striking connection with the calculus of variations. Moreover, in Fresnel optics and quantum mechanics, the generating function is used as the phase function of an oscillatory integral operator; the integral operator is said to ``quantize'' the corresponding symplectomorphism \cite{R:GrSj}, \cite{R:GuStPhy}. (Loosely speaking, when differentiating the integral, one finds that the phase function must satisfy the Hamilton-Jacobi equation.) This topic will be touched upon in Section~\ref{S:metaplectic}. For us, the generating function corresponding to the linear symplectomorphism $\calh$ is the scalar-valued function $\Phi$ in our main theorem:

\begin{theorem}\label{T:mainthm}
	For each $\calh\in\Sp$ there exists a quadratic form $\Phi:\Cbb_{z}^{n}\times\Rbb_{\theta}^{2n}\to \Rbb$ such that 
	\begin{equation*}
		\graph_{\Cbb}\calh=\{(z,-2\tfrac{\partial\Phi}{\partial z}(z,\theta));\,\,\tfrac{\partial\Phi}{\partial\theta}(z,\theta)=0\},
	\end{equation*}
	and $\omega^{\Cbb}|_{\graph_{\Cbb}\calh}$ is given by
	\begin{equation}
		\begin{aligned}
		\omega^{\Cbb}&((z,-2\tfrac{\partial\Phi}{\partial z}(z,\theta)),(w,-2\tfrac{\partial\Phi}{\partial z}(w,\eta)))\\
		&\qquad=2\sum_{j=1}^{n}\sum_{\ell=1}^{2n}\frac{\partial^{2}\Phi}{\partial z_{j}\partial \theta_{\ell}}(z_{j}\eta_{\ell}-w_{j}\theta_{\ell})
		+2\sum_{j,m=1}^{n}\frac{\partial^{2}\Phi}{\partial z_{j}\partial \overline{z}_{m}}(z_{j}\overline{w}_{m}-w_{j}\overline{z}_{m}).
		\end{aligned}
	\end{equation}
	Moreover, our construction provides an explicit general formula for $\Phi$.
\end{theorem}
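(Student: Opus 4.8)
The plan is to exhibit $\graphch$ as a linear $R$-Lagrangian subspace and to produce its generating function by linear algebra. First I would fix the identification of the real phase space with $\Cbb^{n}$ and write $\omega^{\Cbb}$ in the coordinates $(z,\zeta)$ in which $\graphch$ sits, so that $\graphch$ appears as the graph of a real-linear map sending $z$ to a second component of the form $\alpha z+\beta\bar z$ with $\alpha,\beta\in M_{n}(\Cbb)$; the condition $\calh\in\Sp$ then becomes a pair of algebraic relations on $(\alpha,\beta)$. Since $\calh$ is linear, $\graphch$ is a linear subspace and any generating function must be a quadratic form, which is why $\Phi$ may be taken quadratic.

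The heart of the argument is the explicit construction of $\Phi$. A quadratic function of $z$ alone does not suffice: for a real quadratic $\Phi(z)$ the gradient $-2\partial_{z}\Phi$ has the form $\alpha z+\beta\bar z$ only for a restricted class of pairs (for instance $\alpha$ must be symmetric), which a general symplectic pair need not obey. This is precisely the role of the auxiliary variables $\theta\in\Rbb^{2n}$. I would posit the real quadratic ansatz
\[
\Phi(z,\theta)=\Phi_{2}(z,\bar z)+\Phi_{1}(z,\bar z,\theta)+\Phi_{0}(\theta),
\]
with $\Phi_{2}$ a sum of a Hermitian form and the real part of a holomorphic quadratic form, $\Phi_{1}$ a real bilinear pairing of $\theta$ with $(z,\bar z)$, and $\Phi_{0}$ a real quadratic form in $\theta$. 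Imposing $\partial_{\theta}\Phi=0$ solves for $\theta$ as a real-linear function of $(z,\bar z)$, and substituting back into $-2\partial_{z}\Phi$ expresses $\zeta$ as $\alpha z+\beta\bar z$; matching the prescribed $(\alpha,\beta)$ reduces to a system of matrix equations whose closed-form solution, using the symplectic relations from the first step, yields the explicit general formula for $\Phi$ and verifies the first displayed identity of the theorem.

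Finally I would verify the formula for $\omega^{\Cbb}|_{\graphch}$ by direct substitution. Writing $\zeta_{j}=-2\partial_{z_{j}}\Phi(z,\theta)$ and $\sigma_{j}=-2\partial_{z_{j}}\Phi(w,\eta)$ and expanding by bilinearity, the contribution of the holomorphic Hessian $\partial^{2}\Phi/\partial z_{j}\partial z_{m}$ cancels because it is symmetric in $j$ and $m$, while the reality of $\Phi$ leaves exactly the mixed Hessians $\partial^{2}\Phi/\partial z_{j}\partial\theta_{\ell}$ and $\partial^{2}\Phi/\partial z_{j}\partial\bar z_{m}$, reproducing the two sums in the statement; the latter sum carries the $\bar z,\bar w$ dependence responsible for $\Imag\omega^{\Cbb}|_{\graphch}$ being generally nonzero. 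The step I expect to be the main obstacle is the construction itself: arranging a single quadratic ansatz with only $2n$ auxiliary parameters that works uniformly for every $\calh\in\Sp$, including the degenerate configurations where the $\theta$-Hessian of $\Phi_{0}$ drops rank, and confirming that the resulting phase function is nondegenerate so that its critical set is exactly $\graphch$. By comparison, the symplectic-form identity is a bookkeeping computation once the symmetry of the holomorphic Hessian is observed.
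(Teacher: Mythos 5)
There is a genuine gap, and it sits exactly where the paper's proof does all of its work. Your starting point writes $\graphch$ as the graph of a real-linear map $z\mapsto\alpha z+\beta\overline{z}$, and your ansatz-matching step solves $\partial_{\theta}\Phi=0$ for $\theta$ as a function of $(z,\overline{z})$ and substitutes back into $-2\partial_{z}\Phi$. Both moves presuppose that the projection $(z,\zeta)\mapsto z$ restricted to $\graphch$ is injective. Writing $\calh=\left(\begin{smallmatrix}A&B\\C&D\end{smallmatrix}\right)$, that projection has kernel $\{0\}\times\ker B$, so the presupposition holds exactly when $B$ is invertible. In that case your outline reduces to the paper's Section~\ref{S:genfBinv}, and even there your stated motivation for introducing $\theta$ is off: because $\graphch$ is $R$-Lagrangian, the symmetry constraints on $(\alpha,\beta)$ are automatic, so a real quadratic $\Phi(z)$ with \emph{no} auxiliary variables always exists when $\det B\neq 0$. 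The auxiliary variables are needed for a different reason entirely: when $B$ is singular, $\graphch$ is not a graph over $z$ at all, there is no pair $(\alpha,\beta)$ to match, and the critical set $\{\partial_{\theta}\Phi=0\}$ must simultaneously constrain which $z$ occur and leave $\zeta$ underdetermined over those $z$ --- i.e.\ the $\theta$-Hessian must degenerate in a controlled way tied to $\ker B$. You correctly identify this as ``the main obstacle,'' but you do not supply a mechanism for it, and your solve-and-substitute step is unavailable precisely there; if $\partial_{\theta}\Phi=0$ could be solved for $\theta(z,\overline z)$, you would be back to a function of $z$ alone and hence back to the invertible case.

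For comparison, the paper closes this gap in Section~\ref{S:genfBdeg} by the Guillemin--Sternberg method: choose an orthonormal basis $b_{1},\ldots,b_{k}$ of $\ker B$, build from $PW\oplus(W\cap Y)$ (with $W=\graphch$) an adapted symplectic basis of $(\Rbb^{4n},\Real\omega^{\Cbb})$ whose ``horizontal'' and ``vertical'' members are separated, exhibit $\graph\calh$ in the resulting coordinates as $\{t'=0,\ \theta''=\partial F/\partial t''\}$ after inverting an explicit $2n\times 2n$ matrix, and then set $\vp(t,\theta)=\theta'\cdot t'+F(t'')+(\theta''-f''(t''))^{2}$, so that the conditions $\partial\vp/\partial\theta=0$ encode both the linear constraint $t'=0$ and the graph relation; pulling back by $t=t(z)$ gives $\Phi(z,\theta)=\vp(t(z),\theta)$. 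Some construction of this kind is what your proposal is missing. Your final computation of $\omega^{\Cbb}|_{\graphch}$ --- cancellation of the holomorphic Hessian $\partial^{2}\Phi/\partial z_{j}\partial z_{m}$ by symmetry, leaving only the $\partial^{2}\Phi/\partial z_{j}\partial\theta_{\ell}$ and $\partial^{2}\Phi/\partial z_{j}\partial\overline{z}_{m}$ terms --- is correct and agrees with the paper's derivation of (\ref{E:zomegacxgr}).
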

Our notation will be explained in the following subsection, along with the necessary background and a restatement of the open problem. We prove the theorem in Section~\ref{S:genf}, and in Section~\ref{S:metaplectic} we show how our construction seems to adequately answer a question of Folland regarding the metaplectic representation \cite{R:FollandHAPS}. We conclude with a broad indication of future work. In the appendix we both (A) give additional linear-algebraic background and a few new elementary results relevant to our problem and (B) give an additional restatement of our open problem.

\mgap

\subsection{Background and restatement of the problem}\label{S:problem}

In a real symplectic vector space there is already a natural complex structure; the model example is $\Rbb^{2n}$ with the $2n\times 2n$ matrix $\Js=\begin{pmatrix}0&-I\\I&0\end{pmatrix}$, where of course $\Js^{2}=-I$. What we mean by ``complex symplectic linear algebra'' is something else; we instead consider $\Cbb^{2n}$ with the above matrix $\Js$, that is, we consider 
$$\omega^{\Cbb}=\sum_{j=1}^{n}d\zeta_{j}\wedge dz_{j}\quad \text{on $\Cbb_{z}^{n}\times\Cbb_{\zeta}^{n}$}$$
(a nondegenerate alternating bilinear form over $\Cbb$). The basic formalism of complex symplectic linear algebra is not new; indeed, complex symplectic structures naturally appear in the theory of differential equations and have been studied through that lens (see, for example, \cite{R:Schapira} and \cite{R:SjSAM}, or \cite{R:EverittMarkus} for another perspective). The point of view of this paper is that elementary linear-algebraic aspects remain unexplored in the complex case and may help us better understand the real case.

\mgap

A \emph{symplectic vector space} over a field\footnote{Duistermaat's book on Fourier integral operators \cite{R:DuistFIO} contains a brief treatment of symplectic vector spaces over a general field.} $K$ is by definition a pair $(V,\omega)$, where $V$ is a finite-dimensional vector space over $K$ and $\omega$ is a nondegenerate alternating bilinear form on $V$. The basic example is $\Rbb_{x}^{n}\times\Rbb^{n}_{\xi}$ with the symplectic form
$\omega=\sum_{j=1}^{n}d\xi_{j}\wedge dx_{j}$:
\begin{equation}\label{E:basicexample}
	\omega((x,\xi),(x',\xi'))=\sum_{j=1}^{n}(\xi_{j}x'_{j}-x_{j}\xi'_{j}).
\end{equation}
In fact, this is essentially the only example: for a general symplectic vector space $(V,\omega)$ over a field $K$ one can find a basis 
$\{e_{1},\ldots,e_{n},f_{1},\ldots,f_{n}\}$ for $V$ such that, for all $j,k$,
\begin{equation*}
	\omega(e_{j},e_{k})=0,\qquad
	\omega(f_{j},f_{k})=0,\quad\text{and}\qquad
	\omega(f_{j},e_{k})=\delta_{jk}.
\end{equation*}
Such a basis is called a \emph{symplectic basis}, and $\omega$ is of the form (\ref{E:basicexample}) in these coordinates. (In particular, a symplectic vector space is necessarily even-dimensional.) Note that $\omega$ vanishes on the span of the $e_{j}$, and it vanishes on the span of the $f_{j}$; such a subspace is called a \emph{Lagrangian subspace}: a maximal subspace on which $\omega$ vanishes. (A Lagrangian subspace of $V$ is necessarily of dimension $n$.)

\mgap

The symplectic formalism is fundamental in Hamiltonian mechanics: the symplectic form provides an isomorphism between tangent space and cotangent space, mapping the Hamiltonian vector field of a function $f$ to the differential of $f$:
$$df=\omega(\cdot, H_{f}).$$

\mgap

A \emph{linear symplectomorphism} $T$ on $(V,\omega)$ is a linear isomorphism on $V$ such that $T^{\ast}\omega=\omega$, that is,
$$\omega(Tv,Tv')=\omega(v,v')\quad\text{for all $v,v'\in V$.}$$
This is equivalent to the property that a symplectic basis is mapped to a symplectic basis.

\mgap

We now let $(V,\omega)$ be a real symplectic vector space. 
Then
$$(V\times V,\omega\oplus-\omega)$$ is a real symplectic vector space. We write
$\omega_{0}=\omega\oplus-\omega$ so that, by definition,
$$\omega_{0}((v,w),(v',w'))=\omega(v,v')-\omega(w,w').$$ 
The following standard fact\footnote{This is a classical result; for a broad perspective, see Terence Tao's blog post \cite{R:TaoClGr}.} justifies this choice of the symplectic form:

\mgap

\begin{quote}
	$\calh:\,V\to V$ is a linear symplectomorphism if and only if 
	$$\graph\calh=\{(v,\calh (v));\,\,v\in V\}$$
	is a Lagrangian subspace of $(V\times V,\omega_{0})$.
\end{quote}

\mgap

For a basic example, let 
\begin{align*}
	\calh:\,\,\Rbb_{x}^{n}\times\Rbb_{\xi}^{n}&\to \Rbb_{y}^{n}\times\Rbb_{\eta}^{n}\\
	(x,\xi)&\mapsto (y,\eta)
\end{align*}
be a linear symplectomorphism. Then $\graph\calh$ is a Lagrangian subspace for 
$$\omega^{\Rbb}=\sum_{j=1}^{n}d\xi_{j}\wedge dx_{j}-d\eta_{j}\wedge dy_{j}.$$

\mgap

The point of view of this paper is to consider $\graph\calh$ as an $\Rbb$-linear subspace of a complex symplectic vector space. After all, with $z_{j}=x_{j}+iy_{j}$ and
$\zeta_{j}=\xi_{j}+i\eta_{j}$, we have the complex symplectic form
$$\omega^{\Cbb}=\sum_{j=1}^{n}d\zeta_{j}\wedge dz_{j}\quad \text{on $\Cbb_{z}^{n}\times\Cbb_{\zeta}^{n}$,}$$
which induces the two \emph{real} symplectic forms 
\begin{align*}
	\Real\omega^{\Cbb}&=\sum_{j=1}^{n}d\xi_{j}\wedge dx_{j}-d\eta_{j}\wedge dy_{j}\quad\text{and}\\
	\Imag\omega^{\Cbb}&=\sum_{j=1}^{n}d\xi_{j}\wedge dy_{j}+d\eta_{j}\wedge dx_{j}\quad\text{on $\Rbb_{x,\xi}^{2n}\times \Rbb_{y,\eta}^{2n}$.}
\end{align*}
We then say that an $\Rbb$-linear $2n$-dimensional subspace of $\Rbb_{x,\xi}^{2n}\times \Rbb_{y,\eta}^{2n}$ is an \emph{$R$-Lagrangian subspace} if it is Lagrangian with respect to $\Real\omega^{\Cbb}$, and an \emph{$I$-Lagrangian subspace} if it is Lagrangian with respect to $\Imag\omega^{\Cbb}$. Thus the graph of $\calh:\,\Rbb_{x,\xi}^{2n}\to \Rbb_{y,\eta}^{2n}$ may be considered as an $R$-Lagrangian subspace of 
$(\Cbb_{z}^{n}\times\Cbb_{\zeta}^{n},\omega^{\Cbb})$.

\mgap

Writing  a symplectic matrix $\calh\in\Sp$ as $\calh=\begin{pmatrix}A&B\\ C&D\end{pmatrix}$, we have
$$\graph\calh=\{((x,\xi),(Ax+B\xi,Cx+D\xi));\,\,(x,\xi)\in \Rbb^{2n}\};$$
or, in terms of $(z,\zeta)$, we have
$$\graph_{\Cbb}\calh=\{(x+i(Ax+B\xi),\xi+i(Cx+D\xi));\,\,(x,\xi)\in \Rbb^{2n}\}.$$
Thus 
$$\omega^{\Cbb}|_{\graph_{\Cbb}\calh}=i\,\Imag\omega^{\Cbb}|_{\graph\calh}$$ 
is given by 
\begin{align*}
	&\omega^{\Cbb}((x+i(Ax+B\xi),\xi+i(Cx+D\xi)),(x'+i(Ax'+B\xi'),\xi'+i(Cx'+D\xi')))\\
	&\qquad\qquad =i\begin{pmatrix}x^{T}&\xi^{T}\end{pmatrix}
	\begin{pmatrix}C^{T}-C&-A^{T}-D\\ A+D^{T}&B-B^{T}\end{pmatrix}
	\begin{pmatrix}x'\\ \xi'\end{pmatrix}.
\end{align*}
The symplectic form $\Real\omega^{\Cbb}$ vanishes, but the symplectic form $\Imag \omega^{\Cbb}$ might \emph{not} vanish; that is, $\graph_{\Cbb}\calh$ is $R$-Lagrangian but not necessarily $I$-Lagrangian.

We have thus defined a map from the group of symplectic matrices to the space of skew-symmetric matrices
\begin{align*}
	\mmap:\,\,\Sp&\to \mathfrak{so}(2n,\Rbb)\\
	\begin{pmatrix}A&B\\C&D\end{pmatrix}&\mapsto 
	\begin{pmatrix}
		C^{T}-C&-A^{T}-D\\
		A+D^{T}&B-B^{T}
	\end{pmatrix}.
\end{align*}

\mgap

We can thus restate our open problem:

\sgap

\important{Open Problem:} Is the map $\mmap:\,\,\Sp\to \mathfrak{so}(2n,\Rbb)$ a surjection? 

\mgap

While we do not solve this problem, the main result of the paper is Theorem~\ref{T:mainthm}; we can explicitly construct a generating function $\Phi$ for $\calh$ and thus give an alternate characterization of $\omega^{\Cbb}|_{\graph_{\Cbb}\calh}$ and hence of $\mmap$.

\mgap\mgap

\section{In terms of generating functions: the proof of the theorem}\label{S:genf}

Generating functions (in the sense of symplectic geometry) were discovered by Sir William Rowan Hamilton in his extensive work on optics. In modern language (and in the linear case), light rays are specified by the following data: $\Rbb^{2}_{x}$ is a plane of initial positions perpendicular to the optical axis of the system, $\xi\in\Rbb^{2}$ are the initial ``directions'' (multiplied by the index of refraction), $\Rbb^{2}_{y}$ is a plane of terminal positions, and $\eta\in\Rbb^{2}$ are the terminal directions. The spaces $\Rbb^{4}_{x,\xi}$ and $\Rbb^{4}_{y,\eta}$ are given the standard symplectic structures. Taken piece by piece, the optical system consists of a sequence of reflections and refractions for each light ray, the laws of which were long known; Hamilton's discovery was that, taken as a whole, the optical system is determined by a single function, the \emph{generating function}, or, as Hamilton called it, the \emph{characteristic function}, of the optical system. The transformation from initial conditions to terminal conditions is a symplectomorphism expressible in terms of a single scalar-valued function, ``by which means optics acquires, as it seems to me, an uniformity and simplicity in which it has been hitherto deficient'' (\cite{R:HamRaysOne}, Section~IV, Paragraph~20).\footnote{There are different types of generating functions in symplectic geometry, and, as Arnold writes, ``[the apparatus of generating functions] is unfortunately noninvariant and it uses, in an essential way, the coordinate structure in phase space'' (\S 47 of \cite{R:Arnold}). For our purposes, we may take the term ``generating function'' to broadly mean a scalar-valued function which generates a symplectomorphism (or, more generally, a Lagrangian submanifold) in the same sense that a potential function generates a conservative vector field. Our generating functions are denoted by the symbol $\Phi$ below.}

The optical framework gives an intuitive reason why, in the symplectic matrix $\calh=\begin{pmatrix}A&B\\C&D\end{pmatrix}$, the rank of $B$ plays a special role in characterizing $\calh$ and thus its generating function. Again, $\calh$ maps the initial (position, ``direction'')-pair $(x,\xi)$ to the terminal (position, ``direction'')-pair 
$$\begin{pmatrix}y\\ \eta \end{pmatrix}=\begin{pmatrix}Ax+B\xi\\ Cx+D\xi\end{pmatrix}.$$
The case $B=0$ corresponds to perfect focusing: all the rays from a given position $x$ arrive at the same position $y$, resulting in a perfect image. And the case $\det B\neq 0$ corresponds to \emph{no} such focusing: two rays with initial position $x$ but different initial ``directions'' must arrive at different positions $y$. (See \cite{R:GuStPhy} for an exposition of symplectic techniques in optics.)

\mgap

\subsection{When $B$ is invertible}\label{S:genfBinv}

We recall that 
\begin{equation*}
	\graph_{\Cbb}\calh
	=\{(x+i(Ax+B\xi),\xi+i(Cx+D\xi));\,\,(x,\xi)\in \Rbb^{2n}\},
\end{equation*}
taken over the reals, is an $R$-Lagrangian subspace of $(\Cbb^{n}_{z}\times\Cbb^{n}_{\zeta},\omega^{\Cbb})$, and we note that
\begin{align*}
	\pi:\,\,\graph_{\Cbb}\calh &\to\Cbb^{n}\\
	(z,\zeta)&\mapsto z
\end{align*}
is an $\Rbb$-linear transformation whose kernel is given by $(x,\xi)\in\{0\}\times \text{ker}B$. Thus it is an $\Rbb$-linear isomorphism if and only if $B$ is invertible. In this case, the general theory of symplectic geometry gives the existence of a real $C^{\infty}$ function $\Phi$ defined on $\graphch$ such that $\graphch = \{z,-2\frac{\partial\Phi}{\partial z}(z); z\in \Cbb^n\}$.

\mgap

Hence if $\det B\neq 0$, then
\begin{align*}
	\graph_{\Cbb}\calh
	&=\{(x+i(Ax+B\xi),\xi+i(Cx+D\xi));\,\,(x,\xi)\in\Rbb^{2n}\}\\
	&=\{(z,-2\tfrac{\partial\Phi}{\partial z}(z));\,\,z\in\Cbb^{n}\}\\
	&=\{(p+iq,B^{-1}(q-Ap)+i(Cp+DB^{-1}(q-Ap)));\,\,p+iq\in\Cbb^{n}\},
\end{align*}
where we write $z=p+iq$, so that
\begin{equation}\label{E:genfpq}
	\Phi(p,q)=\tfrac{1}{2}p^{T}B^{-1}Ap-p^{T}B^{-1}q+\tfrac{1}{2}q^{T}DB^{-1}q.
\end{equation}
This function appears in equation (4.54) of \cite{R:FollandHAPS} and in \S 11 of \cite{R:GuStPhy}. (Note that $B^{-1}A$ and $DB^{-1}$ are symmetric since $\calh$ is symplectic.)
Substituting $p=x$ and $q=Ax+B\xi$, we arrive at the following expression, with the obvious abuse of notation:
\begin{equation}\label{E:genfxxi}
    \Phi(x,\xi)
	=\tfrac{1}{2}x^{T}A^{T}Cx+x^{T}C^{T}B\xi+\tfrac{1}{2}\xi^{T}B^{T}D\xi.
\end{equation}
Or, writing $\Phi$ with respect to $z$ and $\overline{z}$, we have
\begin{align*}
	\Phi(z)&=
	\tfrac{1}{8}z^{T}(B^{-1}A+2iB^{-1}-DB^{-1})z\\
	&\qquad +\tfrac{1}{4}\overline{z}^{T}(B^{-1}A-i(B^{T})^{-1}+iB^{-1}+DB^{-1})z\\
	&\qquad\qquad+\tfrac{1}{8}\overline{z}^{T}(B^{-1}A-2iB^{-1}-DB^{-1})\overline{z}.
\end{align*}
Thus
\begin{equation*}
	\left(\frac{\partial^{2}\Phi}{\partial z_{j}\partial \overline{z}_{k}}\right)
	=\tfrac{1}{4}(B^{-1}A-iB^{-1}+i(B^{T})^{-1}+DB^{-1}).
\end{equation*}

\mgap

We can directly compute $\omega^{\Cbb}$ restricted to $\graph_{\Cbb}\calh$ in terms of $z$ and $\overline{z}$:
\begin{align*}
	\omega^{\Cbb}((z,-2\tfrac{\partial\Phi}{\partial z}(z)),(z',-2\tfrac{\partial\Phi}{\partial z}(z')))
	&=4i\,\Imag\left(\sum_{j,k}z_{j}\left(\frac{\partial^{2}\Phi}{\partial z_{j}\partial\overline{z}_{k}}\right)\overline{z}'_{k}\right)\\
	&=2\sum_{j,k}\frac{\partial^{2}\Phi}{\partial z_{j}\partial\overline{z}_{k}}(z_{j}\overline{z}'_{k}-z'_{j}\overline{z}_{k})
\end{align*}
If we substitute
\begin{align*}
	z&=x+i(Ax+B\xi)\\
	z'&=x'+i(Ax'+B\xi')
\end{align*}
then after a lengthy mechanical calculation we recover the expression
\begin{align*}
	\omega^{\Cbb}((z,-2\tfrac{\partial\Phi}{\partial z}(z)),(z',-2\tfrac{\partial\Phi}{\partial z}(z'))) = 2\sum_{j,k}\frac{\partial^{2}\Phi}{\partial z_{j}\partial\overline{z}_{k}}(z_{j}\overline{z}'_{k}-z'_{j}\overline{z}_{k})=i\begin{pmatrix}x^{T}& \xi^{T}\end{pmatrix}\mmap(\calh)\begin{pmatrix}x'\\ \xi'\end{pmatrix}.
\end{align*}

\mgap\mgap

\subsection{When $B$ is not invertible}\label{S:genfBdeg}

When $B$ is \emph{not} invertible, we seek $\Phi=\Phi(z,\theta)\in C^{\infty}(\Cbb^{n}\times\Rbb^{N})$ such that
\begin{equation}\label{E:cxgraphinit}
	\graph_{\Cbb}\calh
	=\{(z,-2\tfrac{\partial\Phi}{\partial z}(z,\theta));\,\,\tfrac{\partial\Phi}{\partial\theta}(z,\theta)=0\}.
\end{equation}
We follow the general method outlined by Guillemin and Sternberg \cite{R:GuStGA}.

\mgap

Let 
\begin{align*}
	W&=\graph_{\Cbb}\calh\\
	X&=\{(z,0);\,\,z\in\Cbb^{n}\}\\
	Y&=\{(0,\zeta);\,\,\zeta\in\Cbb^{n}\}.
\end{align*}
Since $W$ is an $R$-Lagrangian subspace, we know that $W\cap Y$ and $PW\subset X$ are orthogonal with respect to 
$\Real\omega^{\Cbb}$, where $P$ is the projection onto $X$ along $Y$. Indeed,
\begin{align*}
	W\cap Y&=\{(0,\xi+iD\xi);\,\,\xi\in\text{ker}B\},\\
	PW&=\{(x+i(Ax+B\xi),0);\,\,(x,\xi)\in\Rbb^{2n}\},
\end{align*}
and we can check directly that, with $\xi\in\text{ker}B$,
\begin{equation*}
	\omega^{\Cbb}((0,\xi+iD\xi),(x'+i(Ax'+B\xi'),0))
	=i\left[\xi^{T}(A+D^{T})x'+\xi^{T}B\xi'\right].
\end{equation*}

\mgap

Since $\graph_{\Cbb}\calh$ is not a $\Cbb$-linear subspace but an $\Rbb$-linear subspace, for now we prefer to write
\begin{align*}
	W\cap Y&=\{(0,\xi;0,D\xi);\,\,\xi\in\text{ker}B\}\\
	PW&=\{(x,0;Ax+B\xi,0);\,\,(x,\xi)\in\Rbb^{2n}\}.
\end{align*}
We note that $PW\oplus (W\cap Y)$ has real dimension $2n$, hence is a Lagrangian subspace of $(\Rbb^{4n},\Real\omega^{\Cbb})$. 

\mgap

We seek to write $\graph\calh$ as the graph of a function from $PW\oplus (W\cap Y)$ to a complementary Lagrangian subspace; as a first step, we choose a convenient symplectic basis. We let $\{b_{1},\ldots,b_{k}\}$ be an orthonormal basis for $\ker B$ and extend to an orthonormal basis $\{b_{1},\ldots,b_{n}\}$ for $\Rbb^{n}$, so that
$$\{(0,b_{j};0,Db_{j});\,\,j=1,\ldots,k\}\quad\text{is a basis for $W\cap Y$, and}$$
$$\{(0,0;Bb_{j},0);\,\,j=k+1,\ldots,n\}\cup \{(b_{j},0;Ab_{j},0);\,\,j=1,\ldots,n\}\quad\text{is a basis for $PW$.}$$ We then extend to the following symplectic basis for $(\Rbb^{4n},\Real\omega^{\Cbb})$: 
\begin{equation}\label{E:symplbasis}
\begin{aligned}
	\{(0,0;Ab_{j},0);\,\,j=1,\ldots,k\}&\leftrightarrow \{(0,b_{j};0,Db_{j});\,\,j=1,\ldots,k\}\\
	\{(0,0;Bb_{j},0);\,\,j=k+1,\ldots,n\}&\leftrightarrow \{(0,A^{T}\beta_{j};0,\beta_{j});\,\,j=k+1,\ldots,n\}\\
	\{(b_{j},0;Ab_{j},0);\,\,j=1,\ldots,n\}&\leftrightarrow \{(0,-b_{j};0,0);\,\,j=1,\ldots,n\},
\end{aligned}
\end{equation}
where the $\{\beta_{j}\}_{j=k+1}^{n}$ satisfy
\begin{equation}\label{E:betaconds}
	\begin{cases}
		A^{T}\beta_{j}\in(\ker B)^{\perp}=\Imag B^{T}\\
		b_{J}\cdot B^{T}\beta_{j}=\delta_{Jj}\quad\text{for all $J\in\{k+1,\ldots,n\}$}.
	\end{cases}
\end{equation}
One advantage of using this particular symplectic basis (\ref{E:symplbasis}) is that the vectors on the left are all ``horizontal,'' and the vectors on the right are all ``vertical.'' (The arrows signify the symplectically-dual pairs.) 

\mgap

The following proposition implies the existence of $\{\beta_j\}_{j=k+1}^n$.

\mgap

\begin{prop}
	The set $\{Ab_{1},\ldots,Ab_{k},Bb_{k+1},\ldots,Bb_{n}\}$ is a basis for $\Rbb^{n}$.
\end{prop}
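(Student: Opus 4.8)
The plan is to prove \emph{linear independence}: the set contains $k+(n-k)=n$ vectors in $\Rbb^{n}$, so any proof of independence upgrades immediately to the basis claim by a dimension count. Suppose a dependence
\begin{equation*}
	\sum_{j=1}^{k}\alpha_{j}Ab_{j}+\sum_{j=k+1}^{n}\gamma_{j}Bb_{j}=0.
\end{equation*}
Setting $u=\sum_{j=1}^{k}\alpha_{j}b_{j}$ and $v=\sum_{j=k+1}^{n}\gamma_{j}b_{j}$, this reads $Au+Bv=0$, where $u\in\ker B$ and $v\in(\ker B)^{\perp}=\Imag B^{T}$, the latter because $\{b_{1},\ldots,b_{k}\}$ is an orthonormal basis of $\ker B$ completed to an orthonormal basis of $\Rbb^{n}$. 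The goal is then to force $u=0$ and $v=0$, which forces all the coefficients to vanish.

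The key step is to exploit the symplectic relations satisfied by $\calh=\begin{pmatrix}A&B\\ C&D\end{pmatrix}$. From $\calh^{T}\Js\calh=\Js$ one reads off the block identities $D^{T}A-B^{T}C=I$ and $B^{T}D=D^{T}B$. I would apply $D^{T}$ on the left of $Au+Bv=0$ and substitute these, obtaining
\begin{equation*}
	0=D^{T}Au+D^{T}Bv=(I+B^{T}C)u+B^{T}Dv=u+B^{T}(Cu+Dv),
\end{equation*}
so that $u=-B^{T}(Cu+Dv)\in\Imag B^{T}=(\ker B)^{\perp}$. Since simultaneously $u\in\ker B$, we conclude $u$ is orthogonal to itself, hence $u=0$, and therefore $\alpha_{1}=\cdots=\alpha_{k}=0$.

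With $u=0$ the relation collapses to $Bv=0$ with $v\in(\ker B)^{\perp}$; because $B$ is injective on $(\ker B)^{\perp}$, we get $v=0$, i.e. $\gamma_{k+1}=\cdots=\gamma_{n}=0$, completing the argument. The one genuinely non-mechanical step is the second paragraph: the trick is recognizing that multiplying by $D^{T}$ and invoking $D^{T}A-B^{T}C=I$ together with the symmetry $B^{T}D=D^{T}B$ lands $u$ inside $\Imag B^{T}$, which is exactly the orthogonal complement of the space $\ker B$ in which $u$ already lives. Once this incidence $\ker B\cap\Imag B^{T}=\{0\}$ is in hand the rest is bookkeeping, so I expect \emph{selecting} the right identity to multiply by, rather than verifying any particular computation, to be the only real obstacle.
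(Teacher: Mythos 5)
Your proof is correct and is essentially the paper's argument in matrix form: the paper takes the dot product of the dependence relation with $Db_{J}$ for $J\in\{1,\ldots,k\}$, which is exactly your step of applying $D^{T}$ and using $D^{T}A-B^{T}C=I$, $D^{T}B=B^{T}D$, and $Bb_{J}=0$ to kill the coefficients on the $Ab_{j}$'s, after which linear independence of $\{Bb_{k+1},\ldots,Bb_{n}\}$ (i.e., injectivity of $B$ on $(\ker B)^{\perp}$) finishes the job. Your packaging via $\ker B\cap\mathrm{Im}\,B^{T}=\{0\}$ is a clean way to state the same mechanism.
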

\begin{proof}
	Suppose
	$$\sum_{j=1}^{k}\alpha_{j}Ab_{j}+\sum_{j=k+1}^{n}\alpha_{j}Bb_{j}=0.$$
	We take the dot product with $Db_{J}$, $J\in\{1,\ldots,k\}$, to get $\alpha_{1}=\cdots=\alpha_{k}=0$, and the rest are zero by the linear independence of $\{Bb_{k+1},\ldots,Bb_{n}\}$.
\end{proof}

\mgap

Thus for $J\in\{k+1,\ldots,n\}$ we can take $\beta_{J}$ to be the unique vector orthogonal to the set
$$\{Ab_{1},\ldots,Ab_{k},Bb_{k+1},\ldots,\widehat{Bb_{J}},\ldots,Bb_{n}\}$$
(where the wide hat denotes omission) and satisfying
$$\beta_{J}\cdot Bb_{J}=1.$$

\mgap

We will now describe $\graph\calh$ in terms of the above symplectic coordinate system: we write a general linear combination of the $4n$ vectors and find necessary and sufficient conditions on the coefficients to make the vector in $\graph\calh$. Explicitly, we write the general vector in $\Rbb^{4n}$ as
\begin{equation}\label{E:genVector}
\begin{aligned}
	&\sum_{j=1}^{k}t_{j}'(0,0;Ab_{j},0)+\sum_{j=k+1}^{n}t_{j}''(0,0;Bb_{j},0)+\sum_{j=1}^{n}t_{n+j}''(b_{j},0;Ab_{j},0)\\
	&\qquad+\sum_{j=1}^{k}\theta_{j}'(0,b_{j};0,Db_{j})+\sum_{j=k+1}^{n}\theta_{j}''(0,A^{T}\beta_{j};0,\beta_{j})
	+\sum_{j=1}^{n}\theta_{n+j}''(0,-b_{j};0,0).
\end{aligned}
\end{equation}
(the superscript primes and double-primes are not necessary but are useful for bookkeeping), and we will describe $\graph\calh$ as $(t',\theta'')$ as a function of $(t'',\theta')$.

\mgap

We have the following necessary and sufficient conditions for the vector (\ref{E:genVector}) to be in $\graph\calh$:
\begin{equation*}
	\begin{aligned}
		\sum_{j=1}^{k}t_{j}'Ab_{j}-\sum_{j=k+1}^{n}\theta_{j}''AB^{T}\beta_{j}+\sum_{j=k+1}^{n}\theta_{n+j}''Bb_{j}&=-\sum_{j=k+1}^{n}t_{j}''Bb_{j}\\
		-\sum_{j=k+1}^{n}\theta_{j}''CB^{T}\beta_{j}+\sum_{j=1}^{n}\theta_{n+j}''Db_{j}&=\sum_{j=1}^{n}t_{n+j}''Cb_{j}.
	\end{aligned}
\end{equation*}
In matrix form, this says:
\begin{equation}\label{E:bigmatrixeqn}
\begin{aligned}
    &\begin{pmatrix}
		|&\,&|&|&\,&|&|&\,&|\\
		Ab_{1}&\cdots&Ab_{k}&(-AB^{T}\beta_{k+1})&\cdots&(-AB^{T}\beta_{n})&Bb_{1}&\cdots&Bb_{n}\\
		|&\,&|&|&\,&|&|&\,&|\\
		\,&\,&\,&|&\,&|&|&\,&|\\
		\,&0_{n,k}&\,&(-CB^{T}\beta_{k+1})&\cdots&(-CB^{T}\beta_{n})&Db_{1}&\cdots&Db_{n}\\
		\,&\,&\,&|&\,&|&|&\,&|
	\end{pmatrix}
	\begin{pmatrix}
		\,\\
		t'\\
		\,\\
		\,\\
		\theta''\\
		\,
	\end{pmatrix}\\
	&\qquad\qquad=
	\begin{pmatrix}
		|&\,&|&\,&\,&\,\\
		(-Bb_{k+1})&\cdots&(-Bb_{n})&\,&0_{n,n}&\,\\
		|&\,&|&\,&\,&\,\\
		\,&\,&\,&|&\,&|\\
		\,&0_{n,n-k}&\,&Cb_{1}&\cdots&Cb_{n}\\
		\,&\,&\,&|&\,&|
	\end{pmatrix}
	\begin{pmatrix}
		\,\\
		\,\\
		t''\\
		\,\\
		\,
	\end{pmatrix}.
\end{aligned}
\end{equation}
We would now like to invert the matrix on the left to get $(t',\theta'')$ as a function of $(t'',\theta')$. Once we do that, we are close to our goal of expressing $\graph\calh$ in terms of a generating function $\Phi$.


Letting $\Pi$ denote the orthogonal projection onto $\ker B$, we find that the inverse of the matrix on the left side of equation (\ref{E:bigmatrixeqn}) is
\begin{equation*}
	\begin{pmatrix}
		\hdash&Db_{1}&\hdash&\,&\,&\,\\
		\,&\vdots&\,&\,&0_{k,n}&\,\\
		\hdash&Db_{k}&\hdash&\,&\,&\,\\
		\hdash&D(\Pi C^{T}B-I)b_{k+1}&\hdash&\hdash&Bb_{k+1}&\hdash\\
		\,&\vdots&\,&\,&\vdots&\,\\
		\hdash&D(\Pi C^{T}B-I)b_{n}&\hdash&\hdash&Bb_{n}&\hdash\\
		\hdash&(D\Pi A^{T}-I)Cb_{1}&\hdash&\hdash&Ab_{1}&\hdash\\
		\,&\vdots&\,&\,&\vdots&\,\\
		\hdash&(D\Pi A^{T}-I)Cb_{n}&\hdash&\hdash&Ab_{n}&\hdash
	\end{pmatrix}.
\end{equation*}
Thus, defining the functions 
\begin{align*}
	f''_{i}(t'')&=\sum_{j=k+1}^{n}[Bb_{i}\cdot Db_{j}]t_{j}''+\sum_{j=1}^{n}[Bb_{i}\cdot Cb_{j}]t_{n+j}''\quad \text{for $i=k+1,\ldots,n$},\\
	f''_{n+i}(t'')&=\sum_{j=k+1}^{n}[Cb_{i}\cdot Bb_{j}]t_{j}''+\sum_{j=1}^{n}[Ab_{i}\cdot Cb_{j}]t_{n+j}''\quad\, \text{for $i=1,\ldots,n$},
\end{align*}
the equation (\ref{E:bigmatrixeqn}) is equivalent to the conditions
\begin{equation*}
		t'=0,\qquad \theta''=f''(t''),
\end{equation*}
and we note that
$$\frac{\partial f_{i}''}{\partial t_{j}''}=\frac{\partial f_{j}''}{\partial t_{i}''} \qquad\text{for all $i,j\in{k+1,\ldots,n}$,}$$
allowing us to define 
\begin{align*}
	F(t'')
	&=\frac{1}{2}\sum_{i=k+1}^{n}\sum_{j=k+1}^{n}t_{i}''[Bb_{i}\cdot Db_{j}]t_{j}''\\
	&\qquad\qquad+\sum_{i=k+1}^{n}\sum_{j=1}^{n}t_{i}''[Bb_{i}\cdot Cb_{j}]t_{n+j}''
	+\frac{1}{2}\sum_{i=1}^{n}\sum_{j=1}^{n}t_{n+i}''[Ab_{i}\cdot Cb_{j}]t_{n+j}''.
\end{align*}
Thus the conditions for the vector to be in $\graph\calh$ are equivalent to the conditions
\begin{equation*}
	t'=0,\qquad\qquad\frac{\partial F}{\partial t''}(t'')=\theta''.
\end{equation*}

\mgap

We now define
\begin{equation*}
	\vp(t',t'';\theta',\theta'')=\theta'\cdot t'+F(t'')+(\theta''-f''(t''))^{2}.
\end{equation*}
Then in $(t',t'';\theta',\theta'')$-coordinates, $\graph\calh$ is given as
\begin{equation*}
	\left\{\left(t',t'';\frac{\partial\vp}{\partial t'},\frac{\partial\vp}{\partial t''}\right);\,\,\frac{\partial\vp}{\partial\theta'}=0,\,\,\frac{\partial\vp}{\partial\theta''}=0\right\}.
\end{equation*}
Or, written in terms of the standard basis, $\graph\calh$ is the set of
\begin{equation}\label{E:graphPhi}
\begin{aligned}
	&\sum_{j=1}^{k}t_{j}'(0,0;Ab_{j},0)+\sum_{j=k+1}^{n}t_{j}''(0,0;Bb_{j},0)+\sum_{j=1}^{n}t_{n+j}''(b_{j},0;Ab_{j},0)\\
	&\qquad +\sum_{j=1}^{k}\frac{\partial\vp}{\partial t_{j}'}(t,\theta)(0,b_{j};0,Db_{j})
	+\sum_{j=k+1}^{n}\frac{\partial\vp}{\partial t_{j}''}(t,\theta)(0,A^{T}\beta_{j};0,\beta_{j})\\
	&\qquad\qquad\qquad\qquad\qquad\qquad+\sum_{j=1}^{n}\frac{\partial\vp}{\partial t_{n+j}''}(t,\theta)(0,-b_{j};0,0)
\end{aligned}
\end{equation}
such that
$$\frac{\partial\vp}{\partial\theta}(t,\theta)=0.$$

\mgap

We return to complex coordinates, in the standard basis; for that purpose we write the ``horizontal'' parts of (\ref{E:graphPhi}) as:
\begin{align*}
	z:=\sum_{j=1}^{k}it_{j}'Ab_{j}+\sum_{j=k+1}^{n}it_{j}''Bb_{j}+\sum_{j=1}^{n}t_{n+j}''(I+iA)b_{j}.
\end{align*}
That is,
\begin{align*}
	\Real z&=\sum_{j=1}^{n}t_{n+j}''b_{j}\\
	\Imag z&=\sum_{j=1}^{k}t_{j}'Ab_{j}+\sum_{j=k+1}^{n}t_{j}''Bb_{j}+\sum_{j=1}^{n}t_{n+j}''Ab_{j}.
\end{align*}
With the same notation as before, the inverse transformation is given by
\begin{equation}\label{E:xfnofz}
\begin{aligned}
	t_{j}'&=-b_{j}\cdot\Real z+Db_{j}\cdot \Imag z \qquad\quad\text{for $j\in\{1,\ldots,k\}$,}\\
	t_{j}''&=-A^{T}\beta_{j}\cdot\Real z+\beta_{j}\cdot \Imag z\qquad\,\text{for $j\in\{k+1,\ldots,n\}$, and}\\
	t_{n+j}''&=b_{j}\cdot \Real z\qquad\qquad\qquad\qquad\quad\,\,\text{for $j\in\{1,\ldots,n\}$.}
\end{aligned}
\end{equation}

\mgap

We write the ``vertical'' part of (\ref{E:graphPhi}) as:
\begin{equation}\label{E:zetavert}
\begin{aligned}
	\Real\zeta&=\sum_{j=1}^{k}\frac{\partial\vp}{\partial t_{j}'}(t,\theta)b_{j}+\sum_{j=k+1}^{n}\frac{\partial\vp}{\partial t_{j}''}(t,\theta)A^{T}\beta_{j}
		-\sum_{j=1}^{n}\frac{\partial\vp}{\partial t_{n+j}''}(t,\theta)b_{j}\\
	\Imag\zeta&=\sum_{j=1}^{k}\frac{\partial\vp}{\partial t_{j}'}(t,\theta)Db_{j}+\sum_{j=k+1}^{n}\frac{\partial\vp}{\partial t_{j}''}(t,\theta)\beta_{j}.
\end{aligned}
\end{equation}
Using $t=t(z)$ to denote the transformation (\ref{E:xfnofz}), we define 
$$\Phi(z,\theta):=\vp(t(z),\theta)$$
so that (\ref{E:zetavert}) says
$$\zeta=-2\tfrac{\partial\Phi}{\partial z}(z,\theta).$$
In summary, we now have the following expression for $\graph_{\Cbb}\calh$:
\begin{equation}\label{E:finalcxgraph}
	\graph_{\Cbb}\calh=\{(z,-2\tfrac{\partial\Phi}{\partial z}(z,\theta));\,\,\tfrac{\partial\Phi}{\partial \theta}(z,\theta)=0\},
\end{equation}
where the $\theta\in\Rbb^{2n}$ are considered as auxiliary parameters, as in (\ref{E:cxgraphinit}).

\mgap

As for $\omega^{\Cbb}|_{\graph_{\Cbb}\calh}$, we use the expression
\begin{equation*}
	\frac{\partial\Phi}{\partial z}(z,\theta)
	=\frac{\partial^{2}\Phi}{\partial z\partial\theta}\cdot\theta+ \frac{\partial^{2}\Phi}{\partial z^{2}}\cdot z
	+\frac{\partial^{2}\Phi}{\partial z\partial\overline{z}}\cdot\overline{z}
\end{equation*}
to compute
\begin{equation}\label{E:zomegacxgr}
\begin{aligned}
	&\omega^{\Cbb}((z,-2\tfrac{\partial\Phi}{\partial z}(z,\theta)),(w,-2\tfrac{\partial\Phi}{\partial z}(w,\eta)))\\
	&\qquad=2z\cdot\tfrac{\partial\Phi}{\partial z}(w,\eta)-2w\cdot\tfrac{\partial\Phi}{\partial z}(z,\theta)\\
	&\qquad=2\sum_{j=1}^{n}\sum_{\ell=1}^{2n}\frac{\partial^{2}\Phi}{\partial z_{j}\partial \theta_{\ell}}(z_{j}\eta_{\ell}-w_{j}\theta_{\ell})
	+2\sum_{j,m=1}^{n}\frac{\partial^{2}\Phi}{\partial z_{j}\partial \overline{z}_{m}}(z_{j}\overline{w}_{m}-w_{j}\overline{z}_{m}),
\end{aligned}
\end{equation}
where the variables are related by the conditions
\begin{equation*}
	\frac{\partial\Phi}{\partial\theta}(z,\theta)=0\qquad\text{and}\qquad \frac{\partial\Phi}{\partial\theta}(w,\eta)=0.
\end{equation*}
Of course, from Section~\ref{S:Intro}, we know that (\ref{E:zomegacxgr}) is equal to
\begin{equation}\label{E:realomegacxgr}
	i\begin{pmatrix}x^{T}&\xi^{T}\end{pmatrix}\mmap(\calh)\begin{pmatrix}x'\\ \xi'\end{pmatrix},
\end{equation}
where
\begin{align*}
	z&=x+i(Ax+B\xi),\\
	w&=x'+i(Ax'+B\xi'),\\
	-2\frac{\partial\Phi}{\partial z}(z,\theta)&=\xi+i(Cx+D\xi),\quad\text{and}\\
	-2\frac{\partial\Phi}{\partial z}(w,\eta)&=\xi'+i(Cx'+D\xi').
\end{align*}
This completes the proof of the theorem.

\sgap

We leave it as an illustrative exercise for the reader to compute $\Phi$ and its derivatives in the special cases when $B=0$ and when $B$ is invertible (to be compared with the generating function (\ref{E:genfpq}) in Section~\ref{S:genfBinv}).

\mgap
\mgap

\section{Application: the Metaplectic Representation}\label{S:metaplectic}

In the previous section, we showed how to associate to a linear symplectomorphism $\calh$ a (real-valued) generating function $\Phi$. For the purposes of Fresnel optics and quantum mechanics one then associates to the generating function $\Phi$ an oscillatory integral operator 
\begin{equation}\label{E:mumeta}
	\begin{aligned}
	\mu(\calh): \quad \mathcal{S}(\Rbb^{n})&\to\mathcal{S}'(\Rbb^{n})\\
	u&\mapsto a\, h^{-3n/2}\iint e^{i\Phi(x+iy,\theta)/h}u(x)\,dx\,d\theta.
	\end{aligned}
\end{equation}
The map $\mu:\,\,\calh\to\mu(\calh)$ is called the \emph{metaplectic representation} of the symplectic group, and $\mu(\calh)$ is said to be the ``quantization'' of the classical object $\calh$. As defined, the operator $\mu(\calh)$ maps Schwartz functions to tempered distributions, but in fact it extends to a bounded operator on $L^{2}(\Rbb^{n})$; we choose $a$ so that $\mu(\calh)$ is unitary on $L^{2}(\Rbb^{n})$, and here $0<h$ is a small parameter. These are the operators of ``Fresnel optics,'' a relatively simple model theory for optics which accounts for interference and diffraction, describing the propagation of light of wavelength $h$ \cite{R:GuStPhy}. For the analytic details we refer the reader to a text in semiclassical analysis \cite{R:DimSj}; here we only show that the standard conditions are indeed satisfied.

\mgap

The above (real-valued) generating function $\Phi$, for an arbitrary $\calh\in\Sp$, has the property that the $1$-forms
$d\frac{\partial\Phi}{\partial\theta_{1}},\ldots,d\frac{\partial\Phi}{\partial\theta_{2n}}$
are linearly independent. Equivalently, with the notation from the previous section, the matrix
\begin{align*}
	\begin{pmatrix}
		\frac{\partial^{2}\Phi}{\partial(\Real z)\partial\theta'}&\frac{\partial^{2}\Phi}{\partial(\Real z)\partial\theta''}\\
		\\
		\frac{\partial^{2}\Phi}{\partial(\Imag z)\partial\theta'}&\frac{\partial^{2}\Phi}{\partial(\Imag z)\partial\theta''}\\
		\\
		\frac{\partial^{2}\Phi}{\partial\theta'^{2}}&\frac{\partial^{2}\Phi}{\partial\theta'\partial\theta''}\\
		\\
		\frac{\partial^{2}\Phi}{\partial\theta''\partial\theta'}&\frac{\partial^{2}\Phi}{\partial\theta''^{2}}
	\end{pmatrix}
	=
	\begin{pmatrix}
		|&\,&|&\,\\
		(-b_{1})&\cdots&(-b_{k})&\ast\\
		|&\,&|&\,\\
		|&\,&|&\,\\
		Db_{1}&\cdots&Db_{k}&\ast\\
		|&\,&|&\,\\
		\,&0_{k,k}&\,&0_{k,(2n-k)}\\
		\,&0_{(2n-k),k}&\,&2I_{(2n-k),(2n-k)}
	\end{pmatrix}
\end{align*}
has linearly independent columns. (The asterisks denote irrelevant components.) This condition says that that quadratic form $\Phi=\Phi(z,\theta)$ is a \emph{nondegenerate phase function} in the sense of semiclassical analysis \cite{R:DimSj}. 

\mgap

Folland writes: ``it seems to be a fact of life that there is no simple description of the operator $\mu(\mathcal{A})$ that is valid for all $\mathcal{A}\in \text{Sp}$'' (\cite{R:FollandHAPS}, page 193); however, we believe that (\ref{E:mumeta}), combined with our construction of $\Phi$ in the proof of Theorem~\ref{T:mainthm}, is such a description.

\mgap\mgap

\section{Conclusion}

The open problem and results presented in this paper were motivated by the basic question of the relationship between real and complex symplectic linear algebra. Our approach to this question was to consider a real symplectomorphism as a Lagrangian submanifold with regard to the real part of a complex symplectic form. We believe the resulting problem of the nature of the restriction of the imaginary part of the complex symplectic form to this submanifold (formally, $\mmap(\calh)$ for a symplectomorphism $\calh$) is relevant to the structure of the real symplectic group. (We direct the reader to the Appendix for a list of properties of $\mmap$ and reformulations of our open problem which lend credence to this belief.) Accordingly, we view the main result of this paper as primarily a means for further investigation of the open problem of the image of $\mmap$. In addition to solving our open problem, we believe that, in line with our generating function formulation, it would be interesting to have a ``complexified'' theory of the calculus of variations. At present we only have trivial extensions of the real theory.

\newpage

\appendix
\section*{Appendix}
\renewcommand{\thesubsection}{\Alph{subsection}}

\subsection{Elementary properties of $\mmap$}\label{S:elemprops}

We first note some standard facts about symplectic matrices that are used throughout the paper; for further information, see, for example, \cite{R:Cannas} or \cite{R:FollandHAPS}. We write $$\Js=\begin{pmatrix}0&-I\\I&0\end{pmatrix}$$
for the matrix representing the standard symplectic form.

\mgap

\begin{prop} \cite{R:FollandHAPS}
	Let $\calh\in \text{GL}(2n,\Rbb)$. Then the following are equivalent:
	\begin{enumerate}
		\item $\calh\in\Sp$.
		\item $\calh^{T}\Js\calh=\Js$.
		\item $\calh^{-1}=\Js\calh^{T}\Js^{-1}=\begin{pmatrix}D^{T}&-B^{T}\\-C^{T}&A^{T}\end{pmatrix}$.
		\item $\calh^{T}\in\Sp$.
		\item $A^{T}D-C^{T}B=I$, $A^{T}C=C^{T}A$, and $B^{T}D=D^{T}B$.
		\item $AD^{T}-BC^{T}=I$, $AB^{T}=BA^{T}$, and $CD^{T}=DC^{T}$.
	\end{enumerate}
\end{prop}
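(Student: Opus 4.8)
The plan is to use statement (2) as a hub and prove the ring of equivalences $(1)\Leftrightarrow(2)\Leftrightarrow(3)$, then peel off (5) and (6) from (3) by two block multiplications, and finally obtain (4) by a transpose symmetry. First I would establish $(1)\Leftrightarrow(2)$ straight from the definition: by the coordinate formula of Section~\ref{S:problem}, the standard form satisfies $\omega(v,v')=v^{T}\Js v'$, so $\calh$ is a linear symplectomorphism exactly when $(\calh v)^{T}\Js(\calh v')=v^{T}\Js v'$ for all $v,v'\in\Rbb^{2n}$; since $v^{T}(\calh^{T}\Js\calh-\Js)v'=0$ for all $v,v'$ forces the bracketed matrix to vanish, this is precisely $\calh^{T}\Js\calh=\Js$.

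Next I would prove $(2)\Leftrightarrow(3)$ by pure rearrangement. Using that $\calh\in\text{GL}(2n,\Rbb)$ and that $\Js^{-1}=-\Js=\Js^{T}$, multiplying $\calh^{T}\Js\calh=\Js$ on the right by $\calh^{-1}$ and on the left by $\Js^{-1}$ yields $\calh^{-1}=\Js^{-1}\calh^{T}\Js$; the sign identity $\Js^{-1}=-\Js$ shows this equals $\Js\calh^{T}\Js^{-1}$, and a direct block computation gives
\[
	\Js\calh^{T}\Js^{-1}=\begin{pmatrix}D^{T}&-B^{T}\\-C^{T}&A^{T}\end{pmatrix}.
\]
Every step here is reversible, so (2) and (3) are equivalent. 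For $(3)\Leftrightarrow(5)$ and $(3)\Leftrightarrow(6)$, I would write $M=\begin{pmatrix}D^{T}&-B^{T}\\-C^{T}&A^{T}\end{pmatrix}$ and note that, for square matrices, condition (3) (namely $M=\calh^{-1}$) is equivalent to either $M\calh=I$ or $\calh M=I$. Expanding $M\calh=I$ blockwise produces exactly the three relations of (5), while expanding $\calh M=I$ produces those of (6).

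Finally, for $(1)\Leftrightarrow(4)$ I would observe that substituting the blocks of $\calh^{T}$ into the relations (5) reproduces verbatim the relations (6) for $\calh$; hence $\calh^{T}\in\Sp$ if and only if (6) holds for $\calh$, which by the chain above holds if and only if $\calh\in\Sp$. The computations are entirely mechanical, so the only real obstacle is bookkeeping: keeping the signs straight through the $\Js^{-1}=-\Js$ rearrangement, and verifying in the $M\calh=I$ expansion that the two diagonal blocks yield the single relation $A^{T}D-C^{T}B=I$ (one block being the transpose of the other) rather than an independent constraint, so that no redundancy or contradiction arises among the four block equations.
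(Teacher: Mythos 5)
Your proof is correct. The paper itself gives no argument for this proposition --- it is stated as a standard fact with a citation to Folland --- and your chain $(1)\Leftrightarrow(2)\Leftrightarrow(3)$, with $(5)$ and $(6)$ extracted from the two block expansions $M\calh=I$ and $\calh M=I$ and $(4)$ obtained by the transpose substitution, is exactly the standard argument; the sign bookkeeping ($\Js^{-1}=-\Js$, and the diagonal blocks of $M\calh$ being transposes of one another) all checks out.
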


\mgap

While $\mmap$ may be extended to all of $\mathbb{M}^{2n}(\Rbb)$,
\begin{equation}\label{E:mmapSp}
	\begin{aligned}
	\mmap:\quad \mathbb{M}^{2n}(\Rbb)&\to \mathfrak{so}(2n,\Rbb)\\
	M&\mapsto\Js M+M^{T}\Js,
	\end{aligned}
\end{equation}
for purposes of our open problem the resulting linearity of $\mmap$ does not seem to help when $\mmap$ is restricted to $\Sp$.

\mgap

The following proposition presents some of the most interesting elementary linear algebraic properties of $\mmap$.

\sgap

\begin{prop}
Let $\mmap\colon \mathbb{M}^{2n}(\Rbb)\to\mathfrak{so}(2n,\Rbb)$ be defined as above. Then:
\begin{enumerate}
\item $\text{ker}(\mmap)=\mathfrak{sp}(2n,\Rbb)$, the symplectic Lie algebra.
\item For any $\calh\in\Sp$, $\mmap(\calh) = \Js(\calh + \calh^{-1})$.

In particular, for $\mathcal{U}\in U(n)=\{\left(\begin{smallmatrix}A&-B\\B&A\end{smallmatrix}\right)\in \Sp\}$ we have $\mathcal{U}^{-1}=\mathcal{U}^{T}$, so $\mmap(\mathcal{U})=\Js(\mathcal{U}+\mathcal{U}^{T})$.
\item  For any $\calh\in\Sp$, $\mmap(\calh)$ is invertible (equivalently, $\Imag\omega^{\Cbb}|_{\graph\calh}$ is nondegenerate) if and only if $-1$ is not a member of the spectrum of $\calh^2$. 
\item For $\calh, \mathcal{R}\in \Sp$, we have $\calh^{T}\mmap(\mathcal{R})\calh=\mmap(\calh^{-1}\mathcal{R}\calh)$.
\end{enumerate}
\end{prop}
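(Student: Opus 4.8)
The plan is to treat part~(2) as the key lemma and derive parts~(3) and~(4) from it, while part~(1) is handled independently and immediately. Throughout I would use the characterization of $\Sp$ from the first Proposition of the Appendix, in the form of the identity $\calh^{T}\Js=\Js\calh^{-1}$ (which is item~(2), namely $\calh^{T}\Js\calh=\Js$, multiplied on the right by $\calh^{-1}$), together with its transpose $\Js\calh=(\calh^{-1})^{T}\Js$ (using $\Js^{T}=-\Js$).

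For part~(1) I would simply unwind definitions: $\mmap(M)=\Js M+M^{T}\Js=0$ is equivalent to $M^{T}\Js+\Js M=0$, which is exactly the defining relation of the symplectic Lie algebra $\mathfrak{sp}(2n,\Rbb)$ (the infinitesimal form of $\calh^{T}\Js\calh=\Js$). Hence $\ker(\mmap)=\mathfrak{sp}(2n,\Rbb)$ with no computation. For part~(2), starting from $\mmap(\calh)=\Js\calh+\calh^{T}\Js$, I would substitute $\calh^{T}\Js=\Js\calh^{-1}$ to obtain $\mmap(\calh)=\Js\calh+\Js\calh^{-1}=\Js(\calh+\calh^{-1})$ at once.

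With part~(2) in hand, part~(3) is a factorization argument: since $\Js$ is invertible ($\Js^{2}=-I$), $\mmap(\calh)$ is invertible if and only if $\calh+\calh^{-1}$ is, and writing $\calh+\calh^{-1}=\calh^{-1}(\calh^{2}+I)$ shows this holds if and only if $\calh^{2}+I$ is invertible, i.e. if and only if $-1$ is not an eigenvalue of $\calh^{2}$. The parenthetical equivalence with nondegeneracy of $\Imag\omega^{\Cbb}|_{\graph\calh}$ then follows because, by the computation in Section~\ref{S:Intro}, $\mmap(\calh)$ is precisely the matrix representing that bilinear form in $(x,\xi)$-coordinates. For part~(4), I would again invoke part~(2): $\calh^{T}\mmap(\mathcal{R})\calh=\calh^{T}\Js(\mathcal{R}+\mathcal{R}^{-1})\calh$, and then use $\calh^{T}\Js=\Js\calh^{-1}$ to pull this into $\Js\big(\calh^{-1}\mathcal{R}\calh+\calh^{-1}\mathcal{R}^{-1}\calh\big)$. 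Recognizing $\calh^{-1}\mathcal{R}^{-1}\calh=(\calh^{-1}\mathcal{R}\calh)^{-1}$ and noting that $\calh^{-1}\mathcal{R}\calh\in\Sp$ (since $\Sp$ is a group), a final application of part~(2) identifies this with $\mmap(\calh^{-1}\mathcal{R}\calh)$.

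I do not expect a genuine obstacle here; the computations are routine once the identity $\calh^{T}\Js=\Js\calh^{-1}$ is isolated. The only points requiring a little care are the spectral reformulation in part~(3) — making sure ``invertible'' is translated cleanly into ``$-1$ is not in the spectrum of $\calh^{2}$'' via the factorization $\calh^{-1}(\calh^{2}+I)$ — and, in part~(4), confirming closure of $\Sp$ under conjugation so that part~(2) may legitimately be reapplied on the right-hand side. A defensible alternative for part~(4) would bypass part~(2) entirely and expand $\mmap(\calh^{-1}\mathcal{R}\calh)$ straight from the definition, matching the two resulting terms using both $\calh^{T}\Js=\Js\calh^{-1}$ and $\Js\calh=(\calh^{-1})^{T}\Js$; I would keep the part~(2) route as the primary one for brevity.
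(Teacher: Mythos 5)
Your proposal is correct, and it is exactly the route the paper intends: the paper offers no written proof beyond the remark that the properties ``follow immediately from the definition of $\mmap$,'' and your computations (part (1) from the definition of $\mathfrak{sp}(2n,\Rbb)$, part (2) from $\calh^{T}\Js=\Js\calh^{-1}$, and parts (3) and (4) as corollaries of part (2)) are the standard way to fill in those details. No gaps.
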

The above properties follow immediately from the definition of $\mmap$.

\mgap 

We now take some examples.

\sgap

\important{Examples of Symplectic Matrices and Their Images Under $\mmap$:}
\begin{enumerate}
	\item $$\begin{pmatrix}A&0\\0&(A^{T})^{-1}\end{pmatrix}\mapsto \begin{pmatrix}0&-A^{T}-(A^{T})^{-1}\\A+A^{-1}&0\end{pmatrix}.$$
	In particular, $$\begin{pmatrix}I&0\\0&I\end{pmatrix}\mapsto \begin{pmatrix}0&-2I \\2I&0\end{pmatrix}=2\Js.$$
	
	\mgap
	
	\item For $B=B^{T}$, $\begin{pmatrix}I&B\\0&I\end{pmatrix}\mapsto \begin{pmatrix}0&-2I\\2I&0\end{pmatrix}$.
	
	\mgap
	
	\item For $C=C^{T}$, $\begin{pmatrix}I&0\\C&I\end{pmatrix}\mapsto \begin{pmatrix}0&-2I\\2I&0\end{pmatrix}$.
	
	\mgap
	
	\item $\Js=\begin{pmatrix}0&-I\\I&0\end{pmatrix}\mapsto \begin{pmatrix}0&0\\0&0\end{pmatrix}$.
	
	\mgap
	
	\item For $t\in\Rbb$,
	\begin{equation*}
		\begin{pmatrix}(\cos t)I&(-\sin t)I\\ (\sin t)I&(\cos t)I\end{pmatrix}
		\mapsto \begin{pmatrix}0&-2(\cos t)I\\2(\cos t)I&0\end{pmatrix}.
	\end{equation*}
	
	\item For any $\calh\in\Sp$, we have $\mmap(\calh)=\mmap(\calh^{-1})$.
	
\end{enumerate}

\mgap 

Thus in Examples 2 and 3, $\graph_{\Cbb}\calh$ is an $RI$-subspace ($R$-Lagrangian and $I$-symplectic). 
And in Example 4, $\graph_{\Cbb}\calh$ is a $C$-Lagrangian subspace ($R$-Lagrangian and $I$-Lagrangian).

\mgap

The exact nature of the image of $\mmap$ is an open question. The following is a partial result; for additional partial results, we refer to the appendix.

\mgap

\begin{prop}
	For each $k\in\{0,1,\ldots,n\}$, there exists $\calh_{k}\in\Sp$ such that $\text{rank}(\mmap(\calh_{k}))=2k$. Moreover, for any $\calh\in\Sp$, 
	we have $\ker\mmap(\calh)=\ker(\calh^{2}+I)$.
\end{prop}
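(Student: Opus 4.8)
The final proposition has two parts, and the plan is to handle them separately, leaning on property (2) of the preceding proposition, namely $\mmap(\calh)=\Js(\calh+\calh^{-1})$, which reduces everything to the rank and kernel of $\calh+\calh^{-1}$ since $\Js$ is invertible. For the kernel assertion I would argue directly: $v\in\ker\mmap(\calh)$ iff $\Js(\calh+\calh^{-1})v=0$ iff $(\calh+\calh^{-1})v=0$ (as $\Js$ is invertible), and multiplying by $\calh$ gives $(\calh^{2}+I)v=0$; conversely $(\calh^{2}+I)v=0$ yields $(\calh+\calh^{-1})v=0$ after multiplying by $\calh^{-1}$. This is the easy half and essentially immediate from property (2).

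For the rank-realization part I would exhibit explicit block-diagonal examples. The cleanest route is to build $\calh_{k}$ by taking a direct sum of $2\times 2$ symplectic blocks: use $k$ copies of a block whose $\mmap$-image has rank $2$ and $n-k$ copies of a block whose $\mmap$-image has rank $0$. The rotation example (Example 5 with a generic $t$, e.g.\ a $2\times 2$ rotation by an angle with $\cos t\neq 0$) contributes a nonzero $2\times 2$ skew block, while the single-degree-of-freedom analogue of $\Js$ (Example 4) contributes a zero block. Since $\mmap$ of a symplectic direct sum is the corresponding direct sum of the images (this follows from the block structure of $\Js$ and of the defining formula for $\mmap$), the total rank is exactly $2k$. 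I would state the chosen blocks explicitly and invoke the fact that the rank of a block-diagonal skew matrix is the sum of the block ranks.

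The main obstacle, such as it is, is bookkeeping rather than conceptual difficulty: one must verify that the ambient $\Js$ for $\Rbb^{2n}$ decomposes compatibly with the chosen ordering of coordinates so that a direct sum of one-degree-of-freedom symplectic matrices is genuinely symplectic in $\Sp$ and so that $\mmap$ respects the decomposition. In the standard coordinate ordering $(x_{1},\ldots,x_{n},\xi_{1},\ldots,\xi_{n})$ the blocks are interleaved rather than contiguous, so I would either reorder coordinates by a fixed symplectic permutation or, more simply, phrase the construction so that the relevant submatrices $A,B,C,D$ are themselves block-diagonal with the prescribed $1\times 1$ entries. Once the coordinates are arranged, verifying that each block lies in $\mathrm{Sp}(2,\Rbb)$ and computing its $\mmap$-image is a routine application of the examples already tabulated, and the rank count follows.
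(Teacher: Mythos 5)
Your proposal is correct and follows essentially the same route as the paper: the authors take $\calh_{k}$ to be the direct sum of the identity on $k$ degrees of freedom (a rotation with $t=0$, contributing a rank-$2k$ block $2\Js$) and $\Js$ on the remaining $n-k$ degrees of freedom (contributing zero), written out in the interleaved coordinates $(x',x'',\xi',\xi'')$ exactly as you anticipate in your bookkeeping remark, and the kernel statement is likewise deduced from $\mmap(\calh)=\Js\calh+\calh^{T}\Js=\Js(\calh+\calh^{-1})$.
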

\begin{proof}
We fix $k\in\{0,1,\ldots,n\}$ and write
$$(x,\xi)=(x',x'',\xi',\xi''),\quad x',\xi'\in\Rbb^{k},\quad x'',\xi''\in\Rbb^{n-k}.$$
Let 
$$\calh_{k}(x',x'',\xi',\xi'')=(x',-\xi'',\xi',x'').$$
The matrix representation of $\calh_{k}$ is
\begin{equation*}
	\begin{pmatrix}
		I_{k}&\,&0_{k}&\,\\
		\,&0_{n-k}&\,&-I_{n-k}\\
		0_{k}&\,&I_{k}&\,\\
		\,&I_{n-k}&\,&0_{n-k}
	\end{pmatrix}
	\in\Sp.
\end{equation*}
Then
\begin{equation*}
	\mmap(\calh_{k})
	=
	\begin{pmatrix}
		\,&\,&-2I_{k}&\,\\
		\,&\,&\,&0_{n-k}\\
		2I_{k}&\,&\,&\,\\
		\,&0_{n-k}&\,&\,
	\end{pmatrix},
\end{equation*}
so that 
$$\text{rank}(\mmap(\calh_{k}))=2k.$$

The last statement of the proposition follows from (\ref{E:mmapSp}).
\end{proof}

\mgap

\subsection{Restatement of the problem}

It is sometimes convenient to work with the extension of $\mmap$ to all of $\mathbb{M}^{2n}(\Rbb)$:
\begin{equation*}
	\mmap(M)=\Js M+M^{T}\Js.
\end{equation*}
Then $\mmap:\mathbb{M}(2n,\Rbb)\to\mathfrak{so}(2n,\Rbb)$ is a linear epimorphism with kernel $\mathfrak{sp}(2n,\Rbb)$, the symplectic Lie algebra (see, for example, Proposition~4.2 of \cite{R:FollandHAPS}).
Thus the map $\mmap|_{\Sp}$ is surjective if and only if every element of the quotient space $\mathbb{M}(2n,\Rbb)/\mathfrak{sp}(2n,\Rbb)$ contains a symplectic matrix. So our question is:

\mgap

\important{Question:} Can every $M\in\mathbb{M}(2n,\Rbb)$ be written as $M=\calh+\mathcal{A}$ for some $\calh\in\Sp$ and some $\mathcal{A}\in\mathfrak{sp}(2n,\Rbb)$?

\mgap

\begin{prop}
	Every $M\in\mathbb{M}(2n,\Rbb)/\mathfrak{sp}(2n,\Rbb)$ has a unique representative of the form
	$\begin{pmatrix}0&\mathcal{S}_{2}\\ \mathcal{S}_{3}&\mathcal{D}\end{pmatrix}$, where $\mathcal{S}_{2}$ and $\mathcal{S}_{3}$ are skew-symmetric.
\end{prop}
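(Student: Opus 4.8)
The plan is to show that the displayed template is a vector-space complement to $\mathfrak{sp}(2n,\Rbb)$ inside $\mathbb{M}(2n,\Rbb)$: writing $\mathcal{R}$ for the set of matrices $\left(\begin{smallmatrix}0&\mathcal{S}_{2}\\ \mathcal{S}_{3}&\mathcal{D}\end{smallmatrix}\right)$ with $\mathcal{S}_{2},\mathcal{S}_{3}$ skew-symmetric and $\mathcal{D}$ arbitrary, the existence and uniqueness of the representative are exactly the assertion that $\mathbb{M}(2n,\Rbb)=\mathfrak{sp}(2n,\Rbb)\oplus\mathcal{R}$.

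First I would record the block form of the symplectic Lie algebra. Since $\mathfrak{sp}(2n,\Rbb)=\ker\mmap$ and $\mmap\left(\begin{smallmatrix}a&b\\ c&d\end{smallmatrix}\right)=\left(\begin{smallmatrix}c^{T}-c&-a^{T}-d\\ a+d^{T}&b-b^{T}\end{smallmatrix}\right)$, a matrix lies in $\mathfrak{sp}(2n,\Rbb)$ exactly when $b=b^{T}$, $c=c^{T}$, and $d=-a^{T}$. The decisive feature is that the diagonal blocks are coupled by $d=-a^{T}$ while the off-diagonal blocks are symmetric --- precisely the pattern dual to the template $\mathcal{R}$, whose top-left block is zero, whose off-diagonal blocks are skew, and whose bottom-right block is free.

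For existence I would construct the representative directly. Given $M=\left(\begin{smallmatrix}A&B\\ C&D\end{smallmatrix}\right)$, subtract the element $S=\left(\begin{smallmatrix}A&\tfrac{1}{2}(B+B^{T})\\ \tfrac{1}{2}(C+C^{T})&-A^{T}\end{smallmatrix}\right)$, which lies in $\mathfrak{sp}(2n,\Rbb)$ by the block description; then the top-left block of $M-S$ vanishes, the off-diagonal blocks become the skew parts $\tfrac{1}{2}(B-B^{T})$ and $\tfrac{1}{2}(C-C^{T})$, and the bottom-right block is $D+A^{T}$, so $M-S\in\mathcal{R}$. For uniqueness it suffices to check $\mathfrak{sp}(2n,\Rbb)\cap\mathcal{R}=\{0\}$: an element of the intersection has $a=0$ from the top-left block, hence $\mathcal{D}=-a^{T}=0$, while each $\mathcal{S}_{i}$ is simultaneously symmetric and skew-symmetric and so vanishes. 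As a sanity check the dimensions balance, $\dim\mathfrak{sp}(2n,\Rbb)+\dim\mathcal{R}=n(2n+1)+n(2n-1)=4n^{2}$, confirming that the direct sum fills all of $\mathbb{M}(2n,\Rbb)$.

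There is no genuine obstacle here beyond careful block bookkeeping; the whole content is the symmetric--skew splitting of the two off-diagonal blocks together with the coupling $d=-a^{T}$. The one point deserving a moment's attention is the asymmetry between the two diagonal blocks: the top-left block is what the symplectic-algebra element is spent annihilating (forcing $a=A$), whereas the bottom-right block is unconstrained and simply absorbs $D+A^{T}$ into $\mathcal{D}$, and it is exactly this that makes $\mathcal{R}$ a complement rather than merely a subspace meeting $\mathfrak{sp}(2n,\Rbb)$ trivially.
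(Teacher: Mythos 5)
Your proposal is correct and follows essentially the same route as the paper: existence by subtracting the same element $\left(\begin{smallmatrix}A&\tfrac{1}{2}(B+B^{T})\\ \tfrac{1}{2}(C+C^{T})&-A^{T}\end{smallmatrix}\right)$ of $\mathfrak{sp}(2n,\Rbb)$, and uniqueness from the fact that a matrix both symmetric and skew-symmetric vanishes. Recasting uniqueness as $\mathfrak{sp}(2n,\Rbb)\cap\mathcal{R}=\{0\}$ and adding the dimension count $n(2n+1)+n(2n-1)=4n^{2}$ is a pleasant cross-check but not a different argument.
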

\begin{proof}
	Existence: let $M=\begin{pmatrix}M_{1}&M_{2}\\M_{3}&M_{4}\end{pmatrix}\in\mathbb{M}(2n,\Rbb)$. Since
	$\begin{pmatrix}\alpha&\beta\\ \gamma&\delta\end{pmatrix}\in \mathfrak{sp}(2n,\Rbb)$ if and only if $\delta=-\alpha^{T}$, $\beta=\beta^{T}$, $\gamma=\gamma^{T}$, we may replace $M$ by
	\begin{align*}
		\tilde{M}
		&=M-
		\begin{pmatrix}
			M_{1}&\frac{1}{2}(M_{2}+M_{2}^{T})\\
			\frac{1}{2}(M_{3}+M_{3}^{T})&-M_{1}^{T}
			\end{pmatrix}
			=\begin{pmatrix}
				0&\frac{1}{2}(M_{2}-M_{2}^{T})\\
				\frac{1}{2}(M_{3}-M_{3}^{T})&M_{4}+M_{1}^{T}
			\end{pmatrix}.
	\end{align*}
	
	Uniqueness: Suppose
	\begin{equation*}
		\begin{pmatrix}
			0&\mathcal{S}_{2}\\
			\mathcal{S}_{3}&\mathcal{D}
		\end{pmatrix}
		=
		\begin{pmatrix}
			0&\mathcal{S}'_{2}\\
			\mathcal{S}'_{3}&\mathcal{D}'
		\end{pmatrix}
		\in\mathbb{M}(2n,\Rbb)/\mathfrak{sp}(2n,\Rbb),
	\end{equation*}
	with the $\mathcal{S}_{j}$ and $\mathcal{S}_{j}'$ skew-symmetric. Thus
	\begin{equation*}
		\begin{pmatrix}
			0&\mathcal{S}_{2}-\mathcal{S}_{2}'\\
			\mathcal{S}_{3}-\mathcal{S}_{3}'&\mathcal{D}-\mathcal{D}'
		\end{pmatrix}
		=
		\begin{pmatrix}
			\alpha&\beta\\
			\gamma&-\alpha^{T}
		\end{pmatrix}
		\in\mathfrak{sp}(2n,\Rbb).
	\end{equation*}
	This shows that $\mathcal{S}_{j}-\mathcal{S}_{j}'$ is symmetric and skew-symmetric, hence zero, and it is clear that
	$\mathcal{D}=\mathcal{D}'$.
\end{proof}

\mgap

Thinking geometrically, we are to find the projection of $\Sp$ onto 
$$\left\{\begin{pmatrix}0&\mathcal{S}_{2}\\\mathcal{S}_{3}&\mathcal{D}\end{pmatrix};\,\,\mathcal{S}_{2},\mathcal{S}_{3}\,\, \text{skew-symmetric}\right\}$$ along $\mathfrak{sp}(2n,\Rbb).$
That is, let $\calh=\begin{pmatrix}A&B\\C&D\end{pmatrix}\in \Sp$. Then
$$\pi(\calh)=\begin{pmatrix}0&\frac{1}{2}(B-B^{T})\\ \frac{1}{2}(C-C^{T})&A^{T}+D\end{pmatrix}.$$
Is every $\begin{pmatrix}0&\mathcal{S}_{2}\\ \mathcal{S}_{3}&\mathcal{D}\end{pmatrix}$ of this form?

\mgap

For a possible simplification, the map
\begin{align*}
	\mathcal{Y}:\,\,\Sp&\to\mathfrak{so}(2n,\Rbb)\\
	\calh&\mapsto \mmap(-\Js\calh)=\calh-\calh^{T}
\end{align*}
has the same image as $\mmap:\,\,\Sp\to\mathfrak{so}(2n,\Rbb)$ and may be easier to understand.

\mgap
\mgap

\important{Acknowledgments.}  Funding for this project was generously provided by Grinnell College, as part of its summer directed research program. The third author is currently a faculty member at California State University, Sacramento, but was a visiting faculty member at Grinnell College while the first draft of the paper was written; he thanks both institutions for their support. The authors are also grateful to the anonymous referee for a thoughtful reading of the paper and suggestions for its improvement.

\sgap

\important{Chris Hellmann}\\
\textit{Department of Mathematics and Statistics, Grinnell College, Grinnell, IA 50112}\\
\textit{hellmann@grinnell.edu}

\sgap

\important{Brennan Langenbach}\\
\textit{Department of Mathematics and Statistics, Grinnell College, Grinnell, IA 50112}\\
\textit{langenba@grinnell.edu}

\sgap

\important{Michael VanValkenburgh}\\
\textit{Department of Mathematics and Statistics, California State University, Sacramento,\\
Sacramento, CA 95819}\\
\textit{mjv@csus.edu}

\end{document}